\apptocmd{\UrlBreaks}{\do\-}{}{}
\newcommand{\ubar}[1]{\underaccent{\bar}{#1}}
\numberwithin{equation}{section}
\theoremstyle{definition}\newtheorem{definition}{Definition}
\newtheorem{lemma}{Lemma}
\newtheorem{theorem}{Theorem}
\theoremstyle{definition}
\theoremstyle{definition}
\theoremstyle{definition}
\theoremstyle{definition}
\title{The Impossibility of Testing for Dependence Using Kendall's $\tau$ Under Missing Data of Unknown Form}
\author[1]{Oliver R. Cutbill}
\author[1]{Rami V. Tabri\thanks{Corresponding author: rami.tabri@sydney.edu.au.}}
\affil{The University of Sydney}
\begin{document}
\maketitle
\begin{abstract}
\noindent This paper discusses the statistical inference problem associated with testing for dependence between two continuous random variables using Kendall's $\tau$ in the context of the missing data problem. We prove the worst-case identified set for this measure of association always includes zero. The consequence of this result is that robust inference for dependence using Kendall's $\tau$, where robustness is with respect to the form of the missingness-generating process, is impossible.  \\ \\
\noindent AMS 2020 subject classifications: 62H15; 62D10; 62G10\\
%\noindent JEL Codes: C12; C14; C31 \\
Keywords: Impossible Inference; Statistical Dependence; Kendall's $\tau$; Partial Identification; Missing Data.
\end{abstract}
\section{Introduction}
Testing for statistical dependence between two random variables is an important facet of theoretical and empirical statistical research, and arises as a problem of interest in various areas of the natural and social sciences. Applications in social science include the study of the relationship between health outcomes and insurance levels (e.g.,~\citealp{cameron1993}), survey analysis (e.g.,~\citealp{opinionsurveys}), stress-testing risk-management models (e.g.,~\citealp{asimit2016}), and stock market co-movements~(e.g.,~\citealp{horvath2015,cameron1993}). In the natural sciences, applications arise in contexts as diverse as cancerous somatic alteration co-occurrences (e.g.,~\citealp{discover}) and the movement of animals across time (e.g.,~\citealp{animals}).

\par Tests for dependence based on Kendall's $\tau$ (\citealp{kendall1938}) constitute a relevant tool in empirical practice to detect monotonic dependence between two random variables. The interested reader may refer, for instance, to the monographs~\cite{nelsen2006},~\cite{BKN-2011}, and the references therein. The strength of such testing procedures is that $\tau$ is a distribution-free measure of association between paired continuous random variables. In particular, let $(X,Y)$ be a pair of continuous random variables having joint distribution $H$ and marginal distributions $F$ and $G$, respectively. In moment form, this measure of association for the random vector $(X,Y)$ is defined as
\begin{align}\label{eq - Kendall tau}
\tau=4 E_H\left[C\left(F(X),G(Y)\right)\right]-1,
\end{align}
where $C$ is the copula of $H$, and $E_H$ denotes the expectation operator with respect to the distribution $H$. The hypothesis testing problem for detecting monotonic dependence using $\tau$ in~(\ref{eq - Kendall tau}) has the form
\begin{align}\label{eq - Kendall tau hypotheses}
H_0: \tau=0\quad\text{versus}\quad H_1:\tau\neq 0.
\end{align}
The null hypothesis in~(\ref{eq - Kendall tau hypotheses}) posits no monotonic dependence between the two random variables, and the alternative hypothesis is the negation of the null.

\par Statistical procedures for the hypothesis test problem~(\ref{eq - Kendall tau hypotheses}) are predicated on the assumption that the random vector $(X,Y)$ is observable. However, this assumption is violated in empirical practice because datasets can have missing values. For example, missing data can arise in the form of nonresponse, as in self-reported cross-sectional and longitudinal surveys, which is inevitable, or at follow-up in clinical studies. See, for example,~\cite{Dutz-2021} for a discussion on the prevalence of nonresponse in economics research. Missing data are also universal in ecological and evolutionary data, as in other branches of science; see, for example, the monograph~\cite{Ecological-book} and the references therein. Imputation methods are commonly used to address the missing data problem and enable testing with a complete dataset. However, the validity of such tests hinges on the correct specification of the imputation procedure, which can lead to biased inferences if misspecified. Another approach in the literature that addresses the missing data problem imposes assumptions on the missingness-generating process (MGP) that point-identify $\tau$ in~(\ref{eq - Kendall tau}). In the context of Kendall's rank-correlation test, see, for example,~\cite{yecanadapaper} who assume the MGP is either missing completely at random or weakly exogenous, and~\cite{Ma} who assumes that it is either missing at random or missing completely at random. While practical, such tests also ignore misspecification of the MGP, which weakens the credibility of any derived inferences (\citealp{manski2003}).

\par Consequently, we ask if it is possible to conduct non-parametric inference for dependence using $\tau$ under missing data of unknown form. The results of this paper imply that such robust inference is impossible. Reasoning from first principles, any sensible testing procedure of this sort must be based on $\tau$'s identified set because it characterizes the information about this parameter contained in the observables. The identified set for this parameter is an interval $[\ubar{\tau},\bar{\tau}]\subseteq [-1,1]$ whose bounds depend on observables. Therefore, the testing problem for inferring statistical dependence using this information must have the form
\begin{align}\label{eq - test problem}
H_{0}: \ubar{\tau}\leq0\;\text{and}\;\bar{\tau}\geq 0\quad \text{versus}\quad H_1: \ubar{\tau}>0\;\text{or}\;\bar{\tau}<0.
\end{align}
Under $H_1$ in~(\ref{eq - test problem}), the identified set is a subset of either $[-1,0)$ or $(0,1]$. Since $\ubar{\tau}\leq\tau\leq \bar{\tau}$ holds by definition, this hypothesis implies that either $\tau<0$ or $\tau>0$ holds, so that $X$ and $Y$ are statistically dependent. We show the bounds satisfy the inequalities $\ubar{\tau}\leq 0\leq \bar{\tau}$, \emph{for all} joint distributions of $(X,Y)$ and MGPs. These inequalities show the null hypothesis in~(\ref{eq - test problem}) always holds, implying that one cannot partition the underlying probability model into two submodels that are compatible with the assertions of the null and alternative hypotheses. Therefore, the worst-case bounds are useless in detecting dependence between $X$ and $Y$ through the testing problem~(\ref{eq - test problem}). We prove that this property of the bounds holds in the setup where the marginal distributions are known to the practitioner, which implies that it holds in the setup where those distributions are unknown. The reason is that the bounds in the case where the marginal distributions are unknown must be less informative, and hence, weakly wider than their counterparts under known marginal distributions, implying that they must also satisfy this property. A critical step in our theoretical derivations is an innovative use of results on extremal dependence described in~\cite{puccetti2}.

\par This paper contributes to the literature on impossible inference, which has a rich history that started with the classic paper of~\cite{Bahadur-Savage}. The recent paper by~\cite{BERTANHA2020247} connects this literature and presents a taxonomy of the types of impossible inferences. Our result falls under Type A in their taxonomy, as the alternative is indistinguishable from the null. However, it should be noted that our result is not a consequence of the model of the null hypotheses being dense in the set of all likely models with respect to the total-variation distance, which is the essential characteristic of Type A impossible inferences. Rather, it flows from the fact that the bounds $\ubar{\tau}$ and $\bar{\tau}$ are uninformative because they do not define a partition of the underlying probability model into two submodels that are compatible with the assertions of $H_0$ and $H_1$ in~(\ref{eq - test problem}).

\par The idea of using bounds to account for missing data problems started with the seminal paper of~\cite{Manski-89} and gained popularity with the important paper of~\cite{manskihorowitz1995}. Since then, there has been a growing influential literature on partial identification that has shaped empirical practice; see, for example,~\cite{canay2016} for a recent survey of this literature and the references therein. Inference on bounds to account for missing data in moment inequality models have been considered in a variety of settings, such as distributional analyses (e.g.,~\citealp{Blundell-Gosling-Ichimura-Meghir}), treatment effect (e.g.,~\citealp{Lee-2009}), and stochastic dominance testing (e.g.,~\citealp{FAKIH2021}). In contrast to those works, this paper shows that such an approach is futile in testing for dependence under missing data of unknown form using Kendall's $\tau$ and its worst-case bounds. We also discuss how to obtain informative partitions of the underlying probability model through restrictions on the dependence between $X$ and $Y$ and/or the MGPs.

\par There is also a strand in the partial identification literature focusing on
parameters that depend on the joint distribution of two random variables with point-identified marginal distributions; see, for example,~\cite{Fan-Patton} for a survey of this strand and the references therein. However, to the best of our knowledge, this literature strand has not considered partial identification of those parameters arising from the missing data problem. While convenient, the point-identification of the margins can be untenable in applications with missing data and can create challenges in the inference for such parameters --- the results of our paper exemplify this point.

\par The rest of this paper is organized as follows. Section~\ref{Section - Setup and Preliminaries} introduces the statistical setup of this paper and preliminary results on extremal dependence that we utilize in the proof of our results. Section~\ref{Section - Results} presents our results, Section~\ref{Section - Discussion} discusses the scope of the results and implications for empirical practice. Section~\ref{Section - Conclusion} concludes. All proofs are relegated to the Appendix.

\section{Setup and Preliminaries}\label{Section - Setup and Preliminaries}
Consider the random vector $(X,Y,Z)$ having joint distribution $P$, where $X$ and $Y$ are the random variables of interest, which are continuous, and $Z$ is a categorical variable supported on $\{1,2,3,4\}$ indicating missingness on $X$ and $Y$. In this setup,
\begin{align}\label{eq - practitioner Observes}
    \text{ the practitioner observes } \begin{cases}
    (X,Y), & \text{ if } Z = 1 \\
    (X,\ast), & \text{ if } Z = 2 \\
    (\ast,Y), & \text{ if } Z = 3 \\
    (\ast,\ast), & \text{ if } Z = 4
    \end{cases}
\end{align}
where $\ast$ denotes the missing variable. For simplicity, we assume that the marginal cumulative distribution functions (CDFs) of $X$ and $Y$, denoted by $F$ and $G$, respectively, are known by the practitioner. We derive the worst-case bounds on $\tau$ under the following probability model.
\begin{definition}\label{Definition - prob model}
Let $\mathcal{P}_{F,G}$ be the set of distributions of the random vector $(X,Y,Z)$ supported on $\mathcal{X}\times\mathcal{Y}\times\{1,2,3,4\}\subseteq\mathbb{R}^2\times\{1,2,3,4\}$, with generic element $P$, such that
\begin{enumerate}[(i)]
\item $P$ has a density, $p$.
\item $(X,Y)$ is a continuous random vector having strictly positive density.
\item $X$ has CDF $F$ and $Y$ has CDF $G$.
\end{enumerate}
\end{definition}
\noindent The worst-case bounds on $\tau$ without the practitioner's knowledge of the marginal CDFs can be computed by extremizing their counterparts with known $F$ and $G$ over feasible candidate values of these CDFs. We elaborate on this point in Section~\ref{Section - Results}, and show impossible inference for dependence under $\mathcal{P}_{F,G}$ implies that it holds in the more general scenario where $F$ and $G$ are unknown to the practitioner.

\par In this setup, a MGP is specified through restrictions on the joint distribution of $(X,Y,Z)$. The model $\mathcal{P}_{F,G}$ does not place any restrictions on the dependence between $(X,Y)$ and $Z$ beyond the existence of a density. To account for the missing data problem, we exhibit $\tau$ as a functional of $P\in\mathcal{P}_{F,G}$. For each $P\in\mathcal{P}_{F,G}$, an application of the Law of Total probability shows the corresponding value of $\tau$ has the following representation:
\begin{align}\label{eq - Kendall tau MDP}
\tau(P)=4\sum_{z=1}^{4}E_{P}\left[C_P\left(F(X),G(Y)\right)\mid Z=z\right]\,P[Z=z]-1,
\end{align}
where $C_P$ is the copula of the joint CDF $P(X\leq \cdot,Y\leq \cdot)$.\footnote{The existence and uniqueness of $C_P$ in our setup is a result due to~\cite{Sklar}.} This representation of $\tau$ is useful since it clarifies the situation faced by the practitioner in our setup. In particular, it shows that $\tau$ can be calculated for each $P\in\mathcal{P}_{F,G}$ using the following parameters: the copula $C_P$; the conditional CDFs, $P(X\leq \cdot,Y\leq \cdot \mid Z = z)$ for $z=1,2,3,4$; and the marginal probabilities of $Z$, $\left\{P(Z = z)\right\}_{z=1}^{4}$. From sampling, asymptotically the practitioner can recover $\left\{P(Z = z)\right\}_{z=1}^{4}$ and $P(X\leq \cdot,Y\leq \cdot \mid Z = 1)$ but not $C_P$ and $\left\{P(X\leq \cdot,Y\leq \cdot \mid Z = z)\right\}_{z=2}^{4}$, as the data alone do not contain any information on the latter. Consequently, a MGP can be characterized in terms of a specification of the conditional CDFs $\left\{P(X\leq \cdot,Y\leq \cdot \mid Z = z)\right\}_{z=1}^{4}$.

\par The above analysis shows $\tau$ is partially identified in the missing data setting when were are agnostic about the MGP. The identified set of $\tau$ in this case is a closed interval subset of $[-1,1]$ whose boundary corresponds to the worst-case bounds on $\tau$. These bounds permit the entire spectrum of MGPs, which is especially useful when the data have a large number of missing values, as there can be a diversity of explanations for it.

\par The next section describes the worst-case bounds on $\tau$ in~(\ref{eq - Kendall tau MDP}) and raises the statistical issues concerning testing for dependence between $X$ and $Y$ in a manner that is robust to the MGP. In developing our results we make use of the Fr\'{e}chet-Hoeffding copula bounds and two results on extreme values of means of supermodular functions from~\cite{puccetti2}. To describe these results, denote by $\mathcal{C}$ the set of all bivariate copulas on the unit square $[0,1]^2$. The Fr\'{e}chet-Hoeffding bounds are $\max\{u+v-1,0\}\leq C(u,v)\leq \min\{u,v\}$ for all $(u,v)\in[0,1]^2$, which hold for all $C\in\mathcal{C}$.

\par A function $s:\mathbb{R}^2 \to \mathbb{R}$ is called \emph{supermodular} if for all $x_1 \leq x_2$ and $y_1 \leq y_2$,
\begin{align*}
    s(x_1,y_1)+s(x_2,y_2) \geq s(x_1,y_2) + s(x_2,y_1),
\end{align*}
 important examples of which are copulas. This point is important as the bounds on $\tau$ are characterized in terms of copulas of $(X,Y)$'s joint distribution. The results of \cite{puccetti2} that we utilize are Theorems~2.1 and~3.1 in their paper and we restate them in the following lemma, but in a form that is more suitable for the derivation of our results.
\begin{lemma}[\cite{puccetti2}]\label{lemma-extremal}
Let $s:\mathbb{R}^2 \to \mathbb{R}$ be a supermodular function, and $X$ and $Y$ be random variables with marginal CDFs $F$ and $G$ respectively. Furthermore, let $\mathcal{C}$ be as described above.
\begin{enumerate}
\item The moment $E_H \left( s\left(X,Y\right) \right)$, when viewed as a functional of the copula $C$ through the representation $H=C(F,G)$, is maximized when $X$ and $Y$ are co-monotonic. That is,
\begin{align}
    \label{lemma-eqn-extremal-1}
    \sup \{E_H \left( s\left(X,Y\right) \right): H=C(F,G),C\in\mathcal{C} \} = E_{H^*} \left( s\left(X,Y\right) \right),
\end{align}
where $H^{*}=\min\left\{F,G\right\}$.
\item  The functional $E_H \left( s\left(X,Y\right) \right)$, when viewed as a functional of the copula $C$ through the representation $H=C(F,G)$,  is minimized when $X$ and $Y$ are counter-monotonic. That is,
\begin{align}
    \label{lemma-eqn-extremal-2}
    \inf \{E_H \left( s\left(X,Y\right) \right): H=C(F,G),C\in\mathcal{C}  \} = E_{H^\dagger} \left( s\left(X,Y\right) \right),
\end{align}
where $H^\dagger=\max\left\{F+G-1,0\right\}$.
\end{enumerate}
\end{lemma}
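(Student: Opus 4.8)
The plan is to obtain the lemma from Theorems~2.1 and~3.1 of \cite{puccetti2} by a change of the optimization variable, recasting those results --- which are phrased for the Fr\'echet class of joint laws with fixed margins --- in terms of the copula $C$ as the free parameter, and then identifying the extremal couplings explicitly.

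First, because $X$ and $Y$ are continuous, Sklar's theorem gives a bijection between the set of bivariate CDFs $H$ with margins $F$ and $G$ and the set $\mathcal{C}$, under the identification $H=C(F,G)$. Hence $\{E_H(s(X,Y)):H=C(F,G),\,C\in\mathcal{C}\}$ is precisely the set of values of $E_H(s(X,Y))$ as $H$ ranges over the Fr\'echet class of $(F,G)$, so (\ref{lemma-eqn-extremal-1})--(\ref{lemma-eqn-extremal-2}) become statements about extremizing $E_H(s(X,Y))$ over that class. Theorems~2.1 and~3.1 of \cite{puccetti2} assert that, for supermodular $s$, this functional attains its maximum at the comonotonic coupling of $F$ and $G$ and its minimum at the countermonotonic coupling. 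Translating back, the comonotonic coupling has joint CDF $(x,y)\mapsto\min\{F(x),G(y)\}$, i.e.\ $H^{*}$, corresponding to the copula $(u,v)\mapsto\min\{u,v\}$; the countermonotonic coupling has joint CDF $(x,y)\mapsto\max\{F(x)+G(y)-1,0\}$, i.e.\ $H^{\dagger}$, corresponding to the copula $(u,v)\mapsto\max\{u+v-1,0\}$. Since $H^{*}$ and $H^{\dagger}$ themselves lie in the Fr\'echet class, the supremum and infimum are attained, yielding the displayed equalities.

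For completeness I would also recall the identity that drives Puccetti's theorems: for any $H_1,H_2$ in the Fr\'echet class of $(F,G)$ one has $E_{H_1}(s(X,Y))-E_{H_2}(s(X,Y))=\iint(H_1-H_2)\,d\mu_s$, where $\mu_s$ is the measure assigning mass $s(x_2,y_2)-s(x_1,y_2)-s(x_2,y_1)+s(x_1,y_1)$ to the rectangle $(x_1,x_2]\times(y_1,y_2]$, which is non-negative precisely because $s$ is supermodular. Combined with the Fr\'echet--Hoeffding bounds $H^{\dagger}\le H\le H^{*}$ holding pointwise in $(x,y)$, non-negativity of $\mu_s$ propagates the ordering to the integrals and gives $E_{H^{\dagger}}(s(X,Y))\le E_H(s(X,Y))\le E_{H^{*}}(s(X,Y))$ for every $H$ in the class.

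The only delicate point --- and the reason for invoking \cite{puccetti2} rather than re-deriving everything from this identity --- is checking the regularity conditions under which the identity holds and the extrema are attained (right-continuity of $s$, integrability of the CDFs against $\mu_s$, finiteness of the relevant moments). In every application of the lemma in this paper $s$ is a copula, hence bounded, so these conditions are satisfied trivially; in the general statement they are exactly the hypotheses of Theorems~2.1 and~3.1 of \cite{puccetti2}. I therefore anticipate no substantive obstacle, only the bookkeeping involved in this translation and in matching the copula-based formulation to the Fr\'echet-class formulation of the cited results.
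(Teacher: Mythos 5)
Your proposal is correct and matches the paper's treatment: the paper gives no separate proof of this lemma, simply restating Theorems~2.1 and~3.1 of \cite{puccetti2} in copula form, which is exactly the translation (via Sklar's theorem and the identification of the comonotonic and countermonotonic couplings with $H^{*}$ and $H^{\dagger}$) that you carry out. Your additional sketch of the Hoeffding-type identity with the measure $\mu_s$ and the Fr\'echet--Hoeffding bounds is a correct account of the mechanism behind the cited results, and your observation that the regularity conditions are trivially met because $s$ is a bounded copula in every application here is a sensible check.
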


\section{Results}\label{Section - Results}
The first result characterizes the worst-case bounds on Kendall's $\tau$ in the case where the marginal CDFs of $(X,Y)$ are known.
\begin{theorem}\label{Thm - WC bounds}
Let $\mathcal{P}_{F,G}$ be given as in Definition~\ref{Definition - prob model}, and suppose that $P\in\mathcal{P}_{F,G}$. The worst-case bounds of $\tau$ under the distribution $P$ that satisfy $\ubar{\tau}(P) \leq \tau \leq \bar{\tau}(P)$, are given by:
\begin{align*}
\bar{\tau}(P) & = 4 E_{P}\left[\min\left\{F(X),G(Y)\right\} \mid Z = 1\right]P(Z = 1) + 4 E_{P}\left[F(X) \mid Z = 2\right]P(Z = 2) \\
& \qquad +4E_{P}\left[G(Y) \mid Z = 3\right]P(Z = 3) + 4P(Z = 4) - 1,\quad \text{and}\\
\ubar{\tau}(P) & = 4 E_{P}\left[\max\left\{F(X)+G(Y)-1,0\right\} \mid Z = 1\right]P(Z = 1) - 1.
\end{align*}
\end{theorem}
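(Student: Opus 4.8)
The plan is to obtain both inequalities straight from the representation~(\ref{eq - Kendall tau MDP}) by bounding the integrand $C_P(F(X),G(Y))$ separately inside each missingness pattern $\{Z=z\}$, using the Fr\'echet--Hoeffding inequalities $\max\{u+v-1,0\}\le C_P(u,v)\le\min\{u,v\}$ together with their weaker consequences $C_P(u,v)\le u$, $C_P(u,v)\le v$, $C_P(u,v)\le 1$ and $C_P(u,v)\ge 0$. The guiding principle is that in each pattern one keeps precisely the consequence whose right-hand side depends only on the coordinates observed in that pattern --- both $X$ and $Y$ when $Z=1$, only $X$ when $Z=2$, only $Y$ when $Z=3$, and neither when $Z=4$ --- so that the resulting term is a functional of the observed distribution alone.

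Carrying this out for the upper bound: in $\{Z=1\}$ retain $C_P(F(X),G(Y))\le\min\{F(X),G(Y)\}$, so that pattern contributes at most $4E_P[\min\{F(X),G(Y)\}\mid Z=1]P(Z=1)$; in $\{Z=2\}$ weaken further via $\min\{F(X),G(Y)\}\le F(X)$, contributing at most $4E_P[F(X)\mid Z=2]P(Z=2)$; symmetrically $\{Z=3\}$ contributes at most $4E_P[G(Y)\mid Z=3]P(Z=3)$; and in $\{Z=4\}$ use $C_P\le 1$, contributing at most $4P(Z=4)$. Adding the four terms and subtracting $1$ yields $\tau\le\bar{\tau}(P)$. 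For the lower bound, nonnegativity of $C_P$ makes the $z=2,3,4$ terms at least $0$, while in $\{Z=1\}$ one has $C_P(F(X),G(Y))\ge\max\{F(X)+G(Y)-1,0\}$, giving $\tau\ge 4E_P[\max\{F(X)+G(Y)-1,0\}\mid Z=1]P(Z=1)-1=\ubar{\tau}(P)$. Each surviving term depends on $P$ only through $\{P(Z=z)\}$, $P(X\le\cdot,Y\le\cdot\mid Z=1)$, $P(X\le\cdot\mid Z=2)$ and $P(Y\le\cdot\mid Z=3)$, all of which are recovered from the sampling scheme~(\ref{eq - practitioner Observes}) given knowledge of $F$ and $G$, so the sandwich $\ubar{\tau}(P)\le\tau\le\bar{\tau}(P)$ holds for every $P\in\mathcal{P}_{F,G}$.

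This bounding step is routine: all integrands lie in $[0,1]$ and $P$ has a density, so there is nothing to check about integrability, and the only real care is the bookkeeping of which Fr\'echet--Hoeffding consequence is admissible in which pattern. The content of the qualifier \emph{worst-case} is the complementary fact that $\bar{\tau}(P)$ and $\ubar{\tau}(P)$ are also the extreme values of $\tau$ over distributions in $\mathcal{P}_{F,G}$ that are observationally equivalent to $P$, which calls for constructing, for each endpoint, a sequence of such distributions along which $\tau$ approaches it. Since the maps $(x,y)\mapsto C(F(x),G(y))$, $F(x)$, $G(y)$ and $\min\{F(x),G(y)\}$ are all supermodular, Lemma~\ref{lemma-extremal} identifies the co-monotone (for $\bar{\tau}$) and counter-monotone (for $\ubar{\tau}$) couplings as the within-pattern extremizers; I expect the main obstacle to be that the four pattern-wise bounds cannot be made tight by a single copula at once, so that one must trade the extremal within-pattern couplings off against the fixed conditional law of $(X,Y)$ given $Z=1$ and against the constraint that the overall $X$- and $Y$-marginals stay equal to $F$ and $G$, and it is precisely in negotiating this trade-off that the extremal-dependence results of~\cite{puccetti2} are needed.
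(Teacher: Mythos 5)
Your argument is essentially identical to the paper's proof: both bound $C_P(F(X),G(Y))$ term by term in the decomposition~(\ref{eq - Kendall tau MDP}) using the Fr\'echet--Hoeffding upper bound on $\{Z=1\}$, the consequences $C_P(u,v)\le u$, $C_P(u,v)\le v$, $C_P(u,v)\le 1$ on $\{Z=2\},\{Z=3\},\{Z=4\}$, and the lower bound together with $C_P\ge 0$ for $\ubar{\tau}(P)$. Your closing paragraph on attainability goes beyond what the theorem asserts and what the paper proves (the paper only remarks on sharpness informally after the statement, and invokes \cite{puccetti2} only in Theorem~\ref{Thm - Impossibility}), but this does not affect the correctness of your proof of the stated sandwich inequality.
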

\begin{proof}
See Appendix~\ref{Appendix - Thm1 proof}.
\end{proof}
\noindent The result of this theorem is that for each $P\in\mathcal{P}_{F,G}$ we can determine bounds on $\tau$ that permit the entire spectrum of MGPs. For each $P\in\mathcal{P}_{F,G}$, these bounds are sharp; that is, any value in the interval $[\ubar{\tau}(P),\bar{\tau}(P)]$, including the endpoints, cannot be rejected as the true value of $\tau(P)$. This property of the bounds follows from the sharpness of the Fr\'{e}chet-Hoeffding bounds on a bivariate copula, which we use in the derivation of $\ubar{\tau}(P)$ and $\bar{\tau}(P)$.

\par To test for statistical dependence using $\tau$ in a manner that is robust to the form of the MGP, one can only consider tests that depend on observables through $\tau$'s identified set. This means positing the hypothesis testing problem
\begin{align}\label{eq - testing problem}
H_{0}:P_0\in\mathcal{P}^{0}_{F,G}\quad\text{versus}\quad H_{1}:P_0\in\mathcal{P}^{1}_{F,G},
\end{align}
where $P_0$ is the true distribution of $(X,Y,Z)$, $\mathcal{P}^{0}_{F,G}=\{P\in\mathcal{P}_{F,G}:\bar{\tau}(P)\geq 0\;\text{and}\;\ubar{\tau}(P)\leq0\}$ and $\mathcal{P}^{1}_{F,G}=\{P\in\mathcal{P}_{F,G}:\bar{\tau}(P)< 0\;\text{or}\;\ubar{\tau}(P)>0\}$. Notice that $\mathcal{P}^{1}_{F,G}$ is the relative complement of $\mathcal{P}^{0}_{F,G}$ in $\mathcal{P}_{F,G}$; that is, $\mathcal{P}^{1}_{F,G}=\mathcal{P}_{F,G}-\mathcal{P}^{0}_{F,G}$, which implies that either $\tau(P_0)<0$ or $\tau(P_0)>0$ holds. The next result implies that $\mathcal{P}^{1}_{F,G}=\emptyset$, meaning the null hypothesis in~(\ref{eq - testing problem}) is always true.
%\par The next result describes a property of the bounds that implies the impossibility of testing for dependence using them.
\begin{theorem}\label{Thm - Impossibility}
Let $\mathcal{P}_{F,G}$ be given as in Definition~\ref{Definition - prob model}. Furthermore, let $\ubar{\tau}(P)$ and $\bar{\tau}(P)$ be given as in Theorem~\ref{Thm - WC bounds}. Then $\ubar{\tau}(P) \leq 0\leq \bar{\tau}(P)$ for every $P\in\mathcal{P}_{F,G}$.
\end{theorem}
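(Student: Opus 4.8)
The plan is to establish the two inequalities $\ubar{\tau}(P)\le 0$ and $\bar{\tau}(P)\ge 0$ separately, each time by sandwiching the (partially observed) conditional moment appearing in Theorem~\ref{Thm - WC bounds} between a mean $E_{C(F,G)}$ of a supermodular function and the corresponding extreme value furnished by Lemma~\ref{lemma-extremal}. It is convenient to write, for $z\in\{1,2,3,4\}$, $\mu_z$ for the sub-probability measure on $\mathbb{R}^2$ with $\mu_z(A)=P((X,Y)\in A,\,Z=z)$; thus $\mu_z$ has total mass $p_z:=P(Z=z)$, its marginals $\mu_{z,X},\mu_{z,Y}$ on $\mathbb{R}$ satisfy $\sum_{z=1}^4\mu_{z,X}=F$ and $\sum_{z=1}^4\mu_{z,Y}=G$ as Borel measures, and, for instance, $E_P[\min\{F(X),G(Y)\}\mid Z=1]\,P(Z=1)=\int\min\{F(x),G(y)\}\,\mu_1(dx,dy)$. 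Two functions will play a role: $M(x,y):=\min\{F(x),G(y)\}$ and $R(x,y):=\max\{F(x)+G(y)-1,0\}$. Each is supermodular on $\mathbb{R}^2$, being one of the Fr\'echet--Hoeffding copulas --- both supermodular on $[0,1]^2$ --- composed with the non-decreasing maps $F$ and $G$; hence Lemma~\ref{lemma-extremal} applies to both, with $s=M$ and with $s=R$.

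For $\ubar{\tau}(P)\le 0$ I would show $\int R\,d\mu_1\le \tfrac14$. Since $\mu_{1,X}\le F$ and $\mu_{1,Y}\le G$ as measures, the deficits $F-\mu_{1,X}$ and $G-\mu_{1,Y}$ are nonnegative with common mass $1-p_1$; coupling them (say, by their suitably rescaled product) and adding the result to $\mu_1$ yields a probability measure $H$ on $\mathbb{R}^2$ with marginals $F$ and $G$ and with $H\ge\mu_1$. Because $R\ge 0$, $\int R\,d\mu_1\le\int R\,dH$, and part~1 of Lemma~\ref{lemma-extremal} (with $s=R$) gives $\int R\,dH\le\int R\,dH^{*}$ for the co-monotone coupling $H^{*}$, under which the continuity of $F$ and $G$ forces $F(X)=G(Y)=U$ a.s. with $U\sim\mathrm{Unif}[0,1]$, so $\int R\,dH^{*}=\int_0^1\max\{2u-1,0\}\,du=\tfrac14$. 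Hence $\ubar{\tau}(P)=4\int R\,d\mu_1-1\le 0$.

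For $\bar{\tau}(P)\ge 0$ I would show $\int M\,d\mu_1+\alpha_2+\beta_3+p_4\ge\tfrac14$, where $\alpha_2:=\int F\,d\mu_{2,X}$ and $\beta_3:=\int G\,d\mu_{3,Y}$ are exactly the quantities $E_P[F(X)\mid Z=2]P(Z=2)$ and $E_P[G(Y)\mid Z=3]P(Z=3)$. Here I would build a probability measure $H$ on $\mathbb{R}^2$ with marginals $F$ and $G$ of the form $H=\mu_1+A+B+C$, where $A$ has mass $p_2$ and $X$-marginal exactly $\mu_{2,X}$, $B$ has mass $p_3$ and $Y$-marginal exactly $\mu_{3,Y}$, and $C$ has mass $p_4$, each of $A,B,C$ being (say) an independent coupling of its own marginals. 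The marginal bookkeeping closes precisely because $\sum_z\mu_{z,X}=F$ and $\sum_z\mu_{z,Y}=G$: one may take the $X$-marginals of $B$ and $C$ to be a mass-$p_3$ sub-measure of $\mu_{3,X}+\mu_{4,X}$ and its mass-$p_4$ complement, and, symmetrically, the $Y$-marginals of $A$ and $C$ to be a mass-$p_2$ sub-measure of $\mu_{2,Y}+\mu_{4,Y}$ and its complement. Bounding $M$ on the three added pieces by $F(x)$, by $G(y)$, and by $1$ respectively gives $\int M\,dH\le\int M\,d\mu_1+\alpha_2+\beta_3+p_4$; on the other hand, part~2 of Lemma~\ref{lemma-extremal} (with $s=M$) gives $\int M\,dH\ge\int M\,dH^{\dagger}$ for the counter-monotone coupling $H^{\dagger}$, under which $F(X)=U$ and $G(Y)=1-U$ a.s., so $\int M\,dH^{\dagger}=\int_0^1\min\{u,1-u\}\,du=\tfrac14$. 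Combining the two estimates yields $\bar{\tau}(P)=4\big(\int M\,d\mu_1+\alpha_2+\beta_3+p_4\big)-1\ge 0$, and the theorem follows.

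The routine ingredients --- that $M$ and $R$ are supermodular, and that both extreme means evaluate to $\tfrac14$ (which is exactly why the bounds pinch at $0=4\cdot\tfrac14-1$, reflecting $\tau=\pm1$ under perfect monotone dependence) --- are straightforward. I expect the only real obstacle to be the reduction from the partially observed mixture $\{\mu_z\}_{z=1}^4$ to an honest coupling of $F$ and $G$ eligible for Lemma~\ref{lemma-extremal}: for the lower bound one must dominate $\mu_1$ by such a coupling, and for the upper bound one must exhibit a coupling that simultaneously dominates $\mu_1$, carries the identified marginals $\mu_{2,X}$ and $\mu_{3,Y}$ on components of the correct mass, and is otherwise loose enough to keep $\int M\,dH$ below $\int M\,d\mu_1+\alpha_2+\beta_3+p_4$. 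Checking that the marginal constraints $\sum_z\mu_{z,X}=F$, $\sum_z\mu_{z,Y}=G$ make this construction feasible --- including the degenerate cases where some $p_z=0$ or $p_1=1$ --- is the step that needs care.
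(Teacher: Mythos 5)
Your proposal is correct and follows essentially the same route as the paper: both arguments reduce the partially observed terms to an expectation under a full coupling of $F$ and $G$, invoke the two extremal-dependence results of Lemma~\ref{lemma-extremal} for the supermodular functions $\max\{F+G-1,0\}$ and $\min\{F,G\}$, and evaluate the comonotone/countermonotone means as $1/4$. The only difference is in bookkeeping: where you construct an auxiliary completion coupling $H=\mu_1+A+B+C$ with prescribed marginals, the paper simply uses the actual law of $(X,Y)$ under $P$ (via $P(Z=z\mid X=x,Y=y)\le 1$ together with the pointwise bounds $F-1,\,G-1,\,0\ \geq\ \min\{F,G\}-1$), which sidesteps the marginal bookkeeping you identify as the delicate step.
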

\begin{proof}
See Appendix~\ref{Appendix - Thm2 proof}.
\end{proof}

\par The result of Theorem~\ref{Thm - Impossibility} shows the worst-case bounds of $\tau$ are not informative in the sense that they do not simultaneously take negative or positive values when the joint distribution of $(X,Y)$ exhibits negative or positive dependence, respectively. This property creates an impossibility in testing for dependence on the basis of $\tau$ that is robust to missingness of any form, as in~(\ref{eq - testing problem}), since it implies that $\mathcal{P}^{1}_{F,G}=\emptyset$.

\par The results of Theorem~\ref{Thm - WC bounds} and~\ref{Thm - Impossibility} have assumed that the marginal CDFs of $X$ and $Y$ are known to the practitioner. In the scenario where this is not the case, neither of those distributions would be point-identified when we are agnostic about the nature of the MGP. By an application of the Law of Total Probability to $F$ and $G$, we can obtain pointwise bounds on these marginal CDFs as $\underline{F}(x) \leq F(x) \leq \overline{F}(x)$ and $\underline{G}(y) \leq G(y) \leq \overline{G}(y)$ for all $(x,y)\in\mathcal{X}\times\mathcal{Y}$, with the boundaries themselves CDFs, given by
\begin{align*}
\overline{F}(x) &=  P(X\leq x\mid Z=1) +P(X\leq x\mid Z=2)P[Z=2]+ P[Z=3] + P[Z=4]\;\forall x\in\mathcal{X}, \\
\underline{F}(x) &= \begin{cases} P(X\leq x\mid Z=1) +P(X\leq x\mid Z=2)P[Z=2] &\text{ if } x \in \mathcal{X} \\
    1 &\text{ if } x = \sup \mathcal{X},
    \end{cases}
\end{align*}
and
\begin{align*}
\overline{G}(y) &=  P(Y\leq y\mid Z=1) +P(Y\leq y\mid Z=3)P[Z=3]+ P[Z=2] + P[Z=4]\;\forall y\in\mathcal{Y}, \\
\underline{G}(x) &= \begin{cases} P(Y\leq y\mid Z=1) +P(Y\leq y\mid Z=3)P[Z=3] &\text{ if } y \in \mathcal{Y} \\
    1 &\text{ if } y = \sup \mathcal{Y}.  \\
    \end{cases}
\end{align*}
Denoting by $\mathcal{F}$ and $\mathcal{G}$ the sets of all CDFs of $X$ and $Y$ that satisfy the respective bounds described above, the probability model is $\mathcal{P}=\bigcup_{F\in\mathcal{F},G\in\mathcal{G}}\mathcal{P}_{F,G}$. Thus, one has bounds on $\tau(P)$ that depend on hypothetical values of the margins $F$ and $G$, and extremizing these bounds with respect to
the margins over $\mathcal{F}$ and $\mathcal{G}$ yields the worst-case upper and lower bounds, $\sup_{(F,G)\in\mathcal{F}\times\mathcal{G}}\bar{\tau}(P)$ and $\inf_{(F,G)\in\mathcal{F}\times\mathcal{G}}\ubar{\tau}(P)$, respectively. Therefore, the conclusion of Theorem~\ref{Thm - Impossibility} also holds for these worst-case bounds since they are wider than their counterparts in the scenario where the marginal CDFs of $X$ and $Y$ are known to the practitioner.\footnote{Scrutinizing the expressions of $\bar{\tau}(P)$ and $\ubar{\tau}(P)$, observe that the worst-case bounds in this larger model can be obtained in closed-form. For the upper bound, replace $F$ and $G$ in $\bar{\tau}(P)$ with $\overline{F}$ and $\overline{G}$, respectively; and for the lower bound, replace $F$ and $G$ in $\ubar{\tau}(P)$ with $\underline{F}$ and $\underline{G}$, respectively.}

\section{Discussion}\label{Section - Discussion}
\par This section discusses the implications of our results. The model $\mathcal{P}_{F,G}$ is large, which is the reason why the bounds $\bar{\tau}$ and $\ubar{\tau}$ do not yield a partition of $\mathcal{P}_{F,G}$ that is compatible with the hypotheses in~(\ref{eq - testing problem}). Note that the model is non-parametric and permits (i) the entire spectrum of MGPs, and (ii) all bivariate absolutely continuous copulas in modelling the statistical dependence between $X$ and $Y$. This point raises the following question: does restricting $\mathcal{P}_{F,G}$ give rise to an identified set for $\tau$ whose bounds are informative in detecting dependence between $X$ and $Y$? The answer is in the affirmative. Restrictions on $\mathcal{P}_{F,G}$ can be motivated by many considerations based on the application at hand. For example, they can be arise from the possession of side-information or by restrictions implied by economic theory as in the partial identification approach in econometrics (e.g.,~\citealp{tamer2010}). We elaborate on this point with an example of the former utilizing results in~\cite{Nelson-et-al-2001}.

\par Let $P_0\in\mathcal{P}_{F,G}$ denote the true distribution of $(X,Y,Z)$. Suppose we possess side-information that $P_0\left(X\leq \tilde{x},Y\leq \tilde{y}\right)=\theta,$ where $\tilde{x}$ and $\tilde{y}$ are the medians of $X$ and $Y$, and $\theta\in[0,1/2]$. Accordingly, we must have $C_{P_0}(1/2,1/2)=\theta$, which represents the side-information in terms of the copula. Theorem~1 of~\cite{Nelson-et-al-2001} provides the bounds on the copula under this restriction, which are given by
\begin{align*}
\ubar{C}_{\theta}(u,v) & =\max\left\{\max\{u+v-1,0\},\theta-(1/2-u)^{+}-(1/2-v)^{+}\right\}\quad\text{and}\\
\bar{C}_{\theta}(u,v) & =\min\left\{\min\{u,v\},\theta+(u-1/2)^{+}+(v-1/2)^{+}\right\},
\end{align*}
for all $(u,v)\in[0,1]^{2}$, where $a^{+}=\max\{a,0\}$. Thus, the bounds on the joint distribution of $(X,Y)$ are
\begin{align}
\ubar{C}_{\theta}(F(x),G(y))\leq P\left(X\leq x,Y\leq y\right)\leq \bar{C}_{\theta}(F(x),G(y)),\quad\forall (x,y)\in\mathcal{X}\times\mathcal{Y},
\end{align}
which hold for all $P\in\mathcal{P}_{F,G,\theta}$ where the probability model $\mathcal{P}_{F,G,\theta}=\left\{P\in\mathcal{P}_{F,G}: P\left(X\leq \tilde{x},Y\leq \tilde{y}\right)=\theta\right\}$ accounts for the side-information. We can apply identical steps as in the proof of Theorem~\ref{Thm - WC bounds} to obtain the corresponding bounds on $\tau$ under the model $\mathcal{P}_{F,G,\theta}$, but replacing the Fr\'{e}chet-Hoeffding lower and upper bounds with $\ubar{C}_{\theta}$ and $\bar{C}_{\theta}$, respectively. For brevity, we omit these details. The bounds on $\tau(P)$ are given by
\begin{align*}
\bar{\tau}_{\theta}(P) & = 4 E_{P}\left[\bar{C}_{\theta}(F(X),G(Y)) \mid Z = 1\right]P(Z = 1) + 4 E_{P}\left[F(X) \mid Z = 2\right]P(Z = 2) \\
& \qquad +4E_{P}\left[G(Y) \mid Z = 3\right]P(Z = 3) + 4P(Z = 4) - 1\quad \text{and}\\
\ubar{\tau}_{\theta}(P) & = 4 E_{P}\left[\ubar{C}_{\theta}(F(X),G(Y))\mid Z = 1\right]P(Z = 1) - 1,
\end{align*}
and satisfy $\left[\ubar{\tau}_{\theta}(P),\bar{\tau}_{\theta}(P)\right]\subset\left[\ubar{\tau}(P),\bar{\tau}(P)\right]$, for $P\in\mathcal{P}_{F,G,\theta}$.

\par In contrast to the worst-case bounds on $\tau$, the bounds $\bar{\tau}_{\theta}$ and $\ubar{\tau}_{\theta}$ are informative, in the sense that $\exists P_1,P_2,P_3\in \mathcal{P}_{F,G,\theta}$ such that
\begin{align*}
\bar{\tau}_{\theta}(P_1),\tau(P_1)<0,\;\ubar{\tau}_{\theta}(P_2),\tau(P_2)>0,\;\text{and}\;\ubar{\tau}_{\theta}(P_3)\leq \tau(P_3)\leq \bar{\tau}_{\theta}(P_3)\;\text{with}\;\tau(P_3)=0.
\end{align*}
We demonstrate this point using a numerical example in which we specify $X$ and $Y$ being uniformly distributed on $[0,1]$ for simplicity. Furthermore, we set $\theta=0.4$ and derive the MGP from a multinomial logit specification for the propensity probabilities; i.e.,
\begin{align}\label{eq - mulitnomial logit}
P\left[Z=z\mid X=x,Y=y\right]=\frac{e^{\gamma_{1,z}x+\gamma_{2,z}y}}{\sum_{j=1}^{4}e^{\gamma_{1,j}x+\gamma_{2,j}y}}\quad\forall z=1,2,3,4.
\end{align}
Finally, to complete the specification of $P$ we must designiate the copula of $(X,Y)$, $C_P$, as the marginal probabilities of $Z$ can be obtained by integrating the propensity scores with respect to this copula. Then, by Bayes' Theorem, the MGP is given by the conditional probability density functions
\begin{align*}
P\left[X=x,Y=y\mid Z=z\right]=P\left[Z=z\mid X=x,Y=y\right]\left[\frac{C_P(x,y)}{P[Z=z]}\right]\quad\forall z=1,2,3,4.
\end{align*}
We set $C_P$ as the bivariate Gaussian copula, with standard normal margins, and construct $P_1$, $P_2$, and $P_3$ through setting the correlation coefficient $\rho$. As $\bar{\tau}_{\theta}(P_1),\ubar{\tau}_{\theta}(P_2),\bar{\tau}_{\theta}(P_3)$, and $\ubar{\tau}_{\theta}(P_3)$ are linear combinations of moments, we calculated them using Monte Carlo simulations with $10^8$ random draws from the corresponding bivariate Gaussian copula.

\par The parameter specification for $P_1$ are as follows: $\gamma_{1,1}=\gamma_{2,1}=2$; $\gamma_{1,2}=-5,\gamma_{2,2}=1/4$;$\gamma_{1,3}=5,\gamma_{2,3}=-1/4$;$\gamma_{1,4}=\gamma_{2,4}=-5$; and $\rho=-0.999$. This yields $\tau(P_1)<0$ and $\bar{\tau}_\theta(P_1)\approx-0.0108$. The parameter specification for $P_2$ are as follows: $\gamma_{1,1}=\gamma_{2,1}=1/2$; $\gamma_{1,2}=3,\gamma_{2,2}=1/2$;$\gamma_{1,3}=1/2,\gamma_{2,3}=-2$;$\gamma_{1,4}=\gamma_{2,4}=2$; and $\rho=0.99$. This specification has $\tau(P_2)>0$ and gives rise to $\ubar{\tau}_\theta(P_1)\approx0.034$. Finally,  the parameter specification for $P_3$ are identical to that under $P_2$ except that now $\rho=0$. This specification has $\tau(P_3)=0$ and gives rise to $\ubar{\tau}_\theta(P_3)\approx-0.32$ and  $\bar{\tau}_\theta(P_3)\approx0.63$.

\par The numerical results demonstrate the refined bounds can be informative in the detection of dependence. In such a situation, the practitioner can consider the following testing problem
\begin{align}\label{eq - testing problem Refined}
H_{0}:P_0\in\mathcal{P}_{F,G,\theta}^{0}\quad\text{versus}\quad H_{1}:P_0\in\mathcal{P}_{F,G,\theta}^{1},
\end{align}
where $\mathcal{P}_{F,G,\theta}^{0}=\{P\in\mathcal{P}_{F,G,\theta}:\bar{\tau}_{\theta}(P)\geq 0\,\text{and}\,\ubar{\tau}_{\theta}(P)\leq0\}$ and $\mathcal{P}_{F,G,\theta}^{1}=\{P\in\mathcal{P}_{F,G,\theta}:\bar{\tau}_{\theta}(P)< 0\,\text{or}\,\ubar{\tau}_{\theta}(P)>0\}$ form a partition of $\mathcal{P}_{F,G,\theta}$. The bounds are linear combinations of moments but with unknown coefficients being the marginal probabilities $P[Z=z]$. As these marginal probabilities are typically estimable at the $\sqrt{n}$-rate, one can adapt moment inequality testing procedures for this situation, which are abundant and well-established (e.g.,~\citealp{Andrews-Soares,Canay,RSW}). Developing the details of such a testing procedure goes beyond the intended scope of the paper, and is left for future research.

\par As this side-information only restricts the dependence between $X$ and $Y$, any valid testing procedure that rejects $H_0$ in favour of $H_1$ in~(\ref{eq - testing problem Refined}) would be robust to the nature of the MGP. This robustness, however, comes at the expense of an ambiguity under the null. Specifically, $\exists P\in\mathcal{P}_{F,G,\theta}$ such that $\tau(P)\neq 0$ and $\ubar{\tau}_{\theta}(P)\leq 0\leq \bar{\tau}_{\theta}(P)$. The uninformative nature of $H_0$ in~(\ref{eq - testing problem Refined}) is a consequence of circumventing assumptions on the MGP, which are unverifiable in practice. If one fails to reject the null, then, unfortunately, one cannot conclude anything informative about the dependence between $X$ and $Y$. In such a situation, we recommend empirical researchers perform a sensitivity analysis of this empirical conclusion (i.e., non-rejection of $H_1$) with respect to plausible assumptions on the MGP. The virtue of this type of analysis is that it establishes, in a transparent way, clear links between empirical outcomes and different assumptions made on the MGP. Such an analysis would reveal non-trivial links between assumptions on the MGP and inferences made. See, for example, ~\cite{Blundell-Gosling-Ichimura-Meghir} and~\cite{Lee-2009} who refine worst-case distributional bounds using economic theory and develop testable implications based on them in the contexts of distributional analyses and treatment effect, respectively. See also~\cite{FAKIH2021} who discuss the refinement of worst-case distributional bounds of ordinal variables in the context of stochastic dominance testing by positing assumptions on the form of nonresponse in self-reported surveys.

\section{Conclusion}\label{Section - Conclusion}
\par This paper establishes the impossibility of performing inference for dependence between two continuous random variables using Kendall's $\tau$ under missing data of unknown form. The crux of the issue is that its identified set always includes zero, implying that the sign of $\tau$ is not identified. We show how refining this identified set using additional information can address this problem, creating a pathway for robust inference based on statistical procedures from the moment inequality testing literature.

\section{Acknowledgement}
We thank Brendan K. Beare and Christopher D. Walker for helpful feedback and comments.

\bibliographystyle{chicago}
\bibliography{thesisbib}

\appendix
\section{Proofs of Results}
\subsection{Proof of Theorem~\ref{Thm - WC bounds}}\label{Appendix - Thm1 proof}
\begin{proof}
The proof proceeds by the direct method. We shall derive bounds on $\tau(P)$ for each $P\in\mathcal{P}_{F,G}$ and recall that
\begin{align*}
\tau(P)=4\sum_{z=1}^{4}E_{P}\left[C_P\left(F(X),G(Y)\right)\mid Z=z\right]\,P[Z=z]-1.
\end{align*}
First, we focus on the upper bound, and bound each term appearing in the sum separately. Starting with $E_{P}\left[C_P\left(F(X),G(Y)\right)\mid Z=1\right]$, note that it is less than or equal to
$E_{P}\left[\min\left\{F(X),G(Y)\right\}\mid Z=1\right]$, since by Fr\'{e}chet-Hoeffding upper bound in $2$-dimensions we have that $C_P\left(F(x),G(y)\right)\leq \min\left\{F(x),G(y)\right\}$ for all $(x,y)\in\mathcal{X}\times\mathcal{Y}$. Now focusing on the term $E_{P}\left[C_P\left(F(X),G(Y)\right)\mid Z=2\right]$, note that $Y$ is not observed and we replace $G(Y)$ it with its largest theoretical value, $1$. Thus we bound $C_P(F(x),G(y)) \leq C_P(F(x),1)= F(x)$ which holds for all $(x,y)\in\mathcal{X}\times\mathcal{Y}$. Therefore, $E_{P}\left[C_P\left(F(X),G(Y)\right)\mid Z=2\right]\leq E_{P}\left[F(X)\mid Z=2\right]$. Similarly, we bound $C_P(F(x),G(y)) \leq C_P(1,G(y))=G(y)$ which holds for all $(x,y)\in\mathcal{X}\times\mathcal{Y}$. Hence, $E_{P}\left[C_P\left(F(X),G(Y)\right)\mid Z=3\right]\leq E_{P}\left[G(Y)\mid Z=3\right]$. Finally, on the event $\{Z=4\}$, which is when neither $X$ nor $Y$ is observed, we bound $C_P\left(F(x),G(y)\right)$ from above by $1$, which holds for all $(x,y)\in\mathcal{X}\times\mathcal{Y}$. This yields $E_{P}\left[C_P\left(F(X),G(Y)\right)\mid Z=4\right]\leq 1$. Combining these bounds yields
\begin{align*}
\tau(P) & \leq 4 E_{P}\left[\min\left\{F(X),G(Y)\right\} \mid Z = 1\right]P(Z = 1) + 4 E_{P}\left[F(X) \mid Z = 2\right]P(Z = 2) \\
& \qquad +4E_{P}\left[G(Y) \mid Z = 3\right]P(Z = 3) + 4P(Z = 4) - 1 \\
 & = \bar{\tau}(P).
\end{align*}

\par Next, we focus on the lower bound, and bound each term appearing in the representation of $\tau(P)$ separately. Starting with $E_{P}\left[C_P\left(F(X),G(Y)\right)\mid Z=1\right]$, note that it is greater than or equal to \\
$E_{P}\left[\max\left\{F(X)+G(Y)-1,0\right\}\mid Z=1\right]$, since by Fr\'{e}chet-Hoeffding lower bound in $2$-dimensions we have that $C_P\left(F(x),G(y)\right)\geq \max\left\{F(x)+G(y)-1,0\right\}$ for all $(x,y)\in\mathcal{X}\times\mathcal{Y}$. Now focusing on the term \\ $E_{P}\left[C_P\left(F(X),G(Y)\right)\mid Z=2\right]$, note that $Y$ is not observed and we replace $G(Y)$ it with its smallest theoretical value, $0$. Thus, we bound $C_P(F(x),G(y)) \geq \max\left\{F(x)+G(y)-1,0\right\} \geq \max\left\{F(x)-1,0\right\}=0$, which holds for all $(x,y)\in\mathcal{X}\times\mathcal{Y}$, implying that $E_{P}\left[C_P\left(F(X),G(Y)\right)\mid Z=2\right]\geq 0$. Similarly we bound $C_P(F(x),G(y)) \geq \max\left\{F(x)+G(y)-1,0\right\} \geq 0$, which holds for all $(x,y)\in\mathcal{X}\times\mathcal{Y}$, so that $E_{P}\left[C_P\left(F(X),G(Y)\right)\mid Z=3\right]\geq 0$. Finally, on the event $\{Z=4\}$, which is when neither $X$ nor $Y$ is observed, we bound $E_{P}\left[C_P\left(F(X),G(Y)\right)\mid Z=4\right]$ from below by $0$. Combining these bounds yields
\begin{align*}
\tau(P) \geq 4 E_{P}\left[\max\left\{F(X)+G(Y)-1,0\right\} \mid Z = 1\right]P(Z = 1) - 1 =\ubar{\tau}(P).
\end{align*}
This concludes the proof.
\end{proof}
\subsection{Proof of Theorem~\ref{Thm - Impossibility}}\label{Appendix - Thm2 proof}
\begin{proof}
The proof proceeds by the direct method. We split the proof into two cases: (i) showing $\ubar{\tau}(P) \leq 0$ for every $P\in\mathcal{P}_{F,G}$, and (ii) showing $\bar{\tau}(P) \geq 0$ for every $P\in\mathcal{P}_{F,G}$.

\par {\bf Part (i)}. Fix $P\in\mathcal{P}_{F,G}$. Firstly, note that if $P(Z=1)=0$, then $\ubar{\tau}(P)=-1$, and the inequality trivially holds. Now, we consider the case $P(Z=1)>0$, and note that in this case
\begin{align}\label{proof - thm2 - 0}
\ubar{\tau}(P)=4P(Z=1)\int_{\mathcal{X}\times\mathcal{Y}}\max\left\{F(x)+G(y)-1,0\right\}\,p(x,y\mid Z=1)\,dx dy-1,
\end{align}
where we have used the Condition (i) in the definition of $\mathcal{P}_{F,G}$ to express the integrator in terms of a density. By Condition (ii), we can re-write $p(x,y\mid Z=1)$ as follows:
\begin{align*}
p(x,y\mid Z=1)=\frac{p(x,y,1)}{P(Z=1)}=\frac{p(x,y,1)}{p_{X,Y}(x,y)}\,\frac{p_{X,Y}(x,y)}{P(Z=1)}=P(Z=1\mid X=x, Y=y)\,\frac{p_{X,Y}(x,y)}{P(Z=1)}.
\end{align*}
Substituting this expression for $p(x,y\mid Z=1)$ in~(\ref{proof - thm2 - 0}) and simplifying yields
\begin{align}
\ubar{\tau}(P) & =4\int_{\mathcal{X}\times\mathcal{Y}}\max\left\{F(x)+G(y)-1,0\right\}\,P(Z=1\mid X=x, Y=y)\,p_{X,Y}(x,y)\,dx dy -1\nonumber\\
 & \leq 4\int_{\mathcal{X}\times\mathcal{Y}}\max\left\{F(x)+G(y)-1,0\right\}\,p_{X,Y}(x,y)\,dx dy-1,\label{proof - thm2 - 1}
\end{align}
where the inequality follow from $P(Z=1\mid X=x, Y=y)\leq 1$ holding for all $(x,y)\in\mathcal{X}\times\mathcal{Y}$. We can maximize the integral in~(\ref{proof - thm2 - 1}) with respect to the joint distribution of $(X,Y)$ to find that
\begin{align}\label{proof - thm2 - 2}
\ubar{\tau}(P)\leq 4\sup_{P\in\mathcal{P}_{F,G}}E_{P}\left[\max\left\{F(X)+G(Y)-1,0\right\}\right]-1.
\end{align}
Since the function $(x,y)\mapsto\max\left\{F(x)+G(y)-1,0\right\}$ is supermodular, Part 1 of Lemma~\ref{lemma-extremal} implies
\begin{align*}
\sup_{P\in\mathcal{P}_{F,G}}E_{P}\left[\max\left\{F(X)+G(Y)-1,0\right\}\right]\leq E_{H^*} \left[\max\left\{F(X)+G(Y)-1,0\right\}\right],
\end{align*}
where $H^{*}=\min\left\{F,G\right\}$. Now, we shall argue that $E_{H^*} \left[\max\left\{F(X)+G(Y)-1,0\right\}\right]=1/4$. As the CDFs $F$ and $G$ are known, we define the new random variables $U := F(X)$ and $V := G(X)$. By the Probability Integral Transform, both $U$ and $V$ are distributed as continuous uniform on the unit interval $[0,1]$. This yields the representation
\begin{align*}
E_{H^*} \left[\max\left\{F(X)+G(Y)-1,0\right\}\right]=E_{C^*} \left[\max\left\{U+V-1,0\right\}\right].
\end{align*}
where $C^*(u,v)=\min(u,v)$. This copula is supported on the line segment $u=v$ in the unit square $[0,1]^2$. Consequently,
$$E_{C^*} \left[\max\left\{U+V-1,0\right\}\right]=\int_{1/2}^{1}(2u-1)\,du =1/4.$$ Therefore, $\ubar{\tau}(P)\leq 0$. Since we have chosen an arbitrary $P\in\mathcal{P}_{F,G}$, the deduction $\ubar{\tau}(P)\leq 0$ holds for all $P\in\mathcal{P}_{F,G}$. This concludes the proof for the lower-bound.

\par {\bf Part (ii)}. First, recall that
\begin{align*}
\bar{\tau}(P) & = 4 E_{P}\left[\min\left\{F(X),G(Y)\right\} \mid Z = 1\right]P(Z = 1) + 4 E_{P}\left[F(X) \mid Z = 2\right]P(Z = 2) \\
& \qquad +4E_{P}\left[G(Y) \mid Z = 3\right]P(Z = 3) + 4P(Z = 4) - 1.
\end{align*}
In the case that $P(Z=4)=1$ holds, we have that $\bar{\tau}(P)=3\geq 0$, which is the desired result. Now we consider the case where $P(Z=4)\in[0,1)$. Substituting $P(Z = 4)=1-\sum_{z=1}^{3}P(Z = z)$ into the expression of $\bar{\tau}(P)$ above and simplifying yields
 \begin{align}
\bar{\tau}(P) & = 4 E_{P}\left[\min\left\{F(X),G(Y)\right\}-1 \mid Z = 1\right]P(Z = 1) + 4 E_{P}\left[F(X)-1 \mid Z = 2\right]P(Z = 2)\nonumber  \\
& \qquad +4E_{P}\left[G(Y)-1 \mid Z = 3\right]P(Z = 3) + 3\nonumber \\
& \geq 4\sum_{z=1}^{3}E_{P}\left[\min\left\{F(X),G(Y)\right\}-1 \mid Z = z\right]P(Z = z)+3,\label{proof - thm2 - 3}
\end{align}
where the inequality~(\ref{proof - thm2 - 3}) arises from the fact that $F(X)-1$ and $G(Y)-1$ are bounded from below by $\min\left\{F(X),G(Y)\right\}-1$ with probability one (under $P$). Next, fix an arbitrary $z\in\{1,2,3\}$ such that $P(Z=z)>0$. Note that such a $z$ exists since $P(Z=4)\in[0,1)$. We will re-express
$$E_{P}\left[\min\left\{F(X),G(Y)\right\}-1 \mid Z = z\right]P(Z = z)$$ in a more convenient form to apply Part 2 of Lemma~\ref{lemma-extremal}. It is by definition equal to
\begin{align*}
P(Z=z)\int_{\mathcal{X}\times\mathcal{Y}}\left(\min\left\{F(x),G(y)\right\}-1\right)\frac{p(x,y,z)}{P(Z=z)}\,dxdy.
\end{align*}
Now since $P\in\mathcal{P}_{F,G}$ we can multiply and divide by $p_{X,Y}(x,y)$ in the integrand and simplifying yields
\begin{align*}
\int_{\mathcal{X}\times\mathcal{Y}}\left(\min\left\{F(x),G(y)\right\}-1\right)\frac{p(x,y,z)}{p_{X,Y}(x,y)}p_{X,Y}(x,y)\,dxdy,
\end{align*}
which is equal to
\begin{align*}
\int_{\mathcal{X}\times\mathcal{Y}}\left(\min\left\{F(x),G(y)\right\}-1\right)P\left[Z=z\mid X=x,Y=y\right]p_{X,Y}(x,y)\,dxdy.
\end{align*}
Note that this re-writing applies for each $z=1,2,3$ for which $P(Z=z)>0$. Now because
\begin{align*}
0\leq P\left[Z=z\mid X=x,Y=y\right] & \leq 1 \quad \forall (x,y,z)\in\mathcal{X}\times\mathcal{Y}\times \{1,2,3,4\}\,\text{and}\\
\min\left\{F(x),G(y)\right\}-1&\leq 0\quad \forall (x,y)\in\mathcal{X}\times\mathcal{Y},
\end{align*}
 the expression in~(\ref{proof - thm2 - 3}) is bounded from below by
\begin{align}\label{proof - thm2 - 5}
4\int_{\mathcal{X}\times\mathcal{Y}}\left(\min\left\{F(x),G(y)\right\}-1\right)p_{X,Y}(x,y)\,dxdy+3=4E_{P}\left[\min\left\{F(X),G(Y)\right\}-1\right]+3,
\end{align}
which in turn is bounded from below by
\begin{align}\label{proof - thm2 - 6}
4\inf_{P\in\mathcal{P}_{F,G}}E_{P}\left[\min\left\{F(X),G(Y)\right\}-1\right]+3.
\end{align}
As the function $(x,y)\mapsto\min\left\{F(x),G(y)\right\}-1$ is supermodular, and the integrator in~(\ref{proof - thm2 - 6}) only depends on the joint distribution of $(X,Y)$, we can apply Part 2 of Lemma~\ref{lemma-extremal} to deduce that the minimal value~(\ref{proof - thm2 - 6}) is bounded from below:
\begin{align}\label{proof - thm2 - 7}
\inf_{P\in\mathcal{P}_{F,G}}E_{P}\left[\min\left\{F(X),G(Y)\right\}-1\right]\geq E_{H^\dagger} \left[\min\left\{F(X),G(Y)\right\}-1\right],
\end{align}
where $H^{\dagger}=\max\left\{F+G-1,0\right\}$. Now we argue that the right side of the inequality in~(\ref{proof - thm2 - 7}) equals -3/4 to deduce the result of this theorem. This expected value equals
$$\left(\int_0^{1/2} (u - 1) \,du - \int_{1/2}^1 u \,du\right)=-3/4.$$
Now, using this result, we find that
\begin{align}\label{proof - thm2 - 8}
\bar{\tau}(P)\geq 4\inf_{P\in\mathcal{P}_{F,G}}E_{P}\left[\min\left\{F(X),G(Y)\right\}-1\right]+3\geq0,
\end{align}
concluding the proof.
\end{proof}
\end{document}